\newtheorem{conjecture}{Conjecture}
\newtheorem{problem}{Problem}
\newtheorem{theorem}{Theorem}
\newtheorem*{proposition*}{Proposition}
\newtheorem{lemma}{Lemma}
\newtheorem{corollary}{Corollary}
\newcommand{\N}{{\mathbb N}}
\newcommand{\RR}{\mathscr{R}}
\newcommand{\lb}{\left(}
\newcommand{\rb}{\right)}
\def \i {{\rm i}}
\def \d {{\rm d}}
\def \e {{\rm e}}
\newcommand{\D}{{\mathcal D}}
\newcommand{\CC}{{\mathcal C}}
\newcommand{\R}{{\mathbb R}}
\newcommand{\A}{\mathbb{A}}
\def \a {\mathbf{a}}
\newcommand{\Er}{{\mathscr{E}}}
\begin{document}
\title{Mean values of ratios of the Riemann zeta function}
 \author[Daodao Yang]{Daodao Yang}

\address{Institute of Analysis and Number Theory \\ Graz University of Technology \\ Kopernikusgasse 24/II, 
A-8010 Graz \\ Austria}

\address{Current address: D\'epartement de math\'ematiques et de statistique\\
		Universit\'e de Montr\'eal\\
		CP 6128 succ. Centre-Ville\\
		Montr\'eal, QC H3C 3J7\\
		Canada}

\email{yangdao2@126.com  \quad dyang@msri.org}

\begin{abstract}
       It is proved that  
      $$\int_{T}^{2T} \left|\frac{\zeta\left(\frac{1}{2}+{\rm i} t\right)}{\zeta\left(1+2{\rm i} t\right)}\right|^2 {\rm d} t = \frac{1}{\zeta(2)} T \log T + \left( \frac{\log \frac{2}{\pi} + 2\gamma -1 }{\zeta(2)}  -4 \,\frac{\zeta^{\prime}(2)}{\zeta^2(2)}  \right) T  +  O\left(T\, \left(\log T\right)^{-2023} \right) , \quad \forall T \geqslant 100.   $$
      For given  $a\in \N$, we also establish similar formulas for second moments of $|\zeta(\tfrac{1}{2} + {\rm i} t)/\zeta(1 + {\rm i} at)|.$ We have \begin{align*}
          \lim_{a \to \infty} \lim_{T \to \infty}\frac{1}{T \log T} \int_{T}^{2T} \left|\frac{\zeta\left(\frac{1}{2}+{\rm i} t\right)}{\zeta\left(1+{\rm i} at\right)}\right|^2 {\rm d} t = \frac{\zeta(2)}{\zeta(4)}.
      \end{align*}
\end{abstract}

\maketitle

 \section{Introduction}

This paper establishes the following new results for mean values of $|\zeta(\tfrac{1}{2} + \i t)/\zeta(1 + \i at)|.$

 \begin{theorem}\label{Main}
 Fix $a\in \N$ and  $B > 0$. Let $T \geqslant 100$, then
 \begin{align*}
\int_{T}^{2T} \left|\frac{\zeta\lb\frac{1}{2}+\i t\rb}{\zeta\lb1+\i at\rb}\right|^2 \d t = \D_1(a)  T \log T + \D_0(a)  T   + O_{a,\,B}\lb T\, \lb \log T\rb^{-B} \rb ,
\end{align*}
 
 where

\begin{align*}
    \D_1(a)&\, =  \prod_{p} \lb1 - \frac{2}{p^{1+\frac{a}{2}}} + \frac{1}{p^2} \rb ,\\
    \D_0(a)&\, =  \lb \log \frac{2}{\pi} + 2\gamma -1 \rb \D_1(a)  +   \widetilde{\D}_0(a) , \\
    \widetilde{\D}_0(a)&\, = 2a \prod_{p} \lb1 - \frac{2}{p^{1+\frac{a}{2}}} + \frac{1}{p^2} \rb \cdot \sum_{p}^{}\frac{\log p}{ p^{1+\frac{a}{2}}} \lb1 - \frac{2}{p^{1+\frac{a}{2}}} + \frac{1}{p^2}\rb^{-1} .
\end{align*}
 \end{theorem}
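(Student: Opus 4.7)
\smallskip
\noindent\textbf{Proof plan.} The strategy is to approximate the reciprocal $1/\zeta(1 + \i a t)$ by a short Dirichlet polynomial, expand the resulting square, reduce the problem to a family of twisted second moments of $\zeta(\tfrac{1}{2} + \i t)$, and finally extract $\D_1(a), \D_0(a)$ from the multiplicative structure of the ensuing double sum.

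\smallskip
\noindent\textbf{Step 1 (Mollification).} Using the classical zero-free region for $\zeta$, set $X := \exp\bigl((\log T)^{\kappa}\bigr)$ with a small $\kappa>0$ (depending on $B$), and by Perron's formula and a contour shift past $\Re(s)=1$, write
$$\frac{1}{\zeta(1 + \i a t)} = M_X(t) + E(t), \qquad M_X(t) := \sum_{n \leq X} \frac{\mu(n)}{n^{1 + \i a t}}.$$
A Cauchy--Schwarz argument using the fourth moment bound $\int_T^{2T} |\zeta(\tfrac{1}{2} + \i t)|^4 \d t \ll T (\log T)^{4}$, together with an $L^2$-bound on $E$, shows that the contribution of $E$ to the integral in Theorem~\ref{Main} is $O_{a,B}\bigl(T(\log T)^{-B}\bigr)$.

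\smallskip
\noindent\textbf{Step 2 (Twisted second moments).} Squaring out $|M_X(t)|^2$ reduces the main integral to
$$\sum_{m, n \leq X} \frac{\mu(m)\mu(n)}{m n}\, J(m, n), \qquad J(m, n) := \int_T^{2T} \left|\zeta(\tfrac{1}{2} + \i t)\right|^2 \lb \frac{m}{n} \rb^{\i a t} \d t.$$
Writing $d := \gcd(m, n)$, $m = d m'$, $n = d n'$, I would establish the asymptotic
$$J(m, n) = \frac{T}{(m' n')^{a/2}} \Bigl[\log T + \log \tfrac{2}{\pi} - a \log(m' n') + 2 \gamma - 1 \Bigr] + \mathcal{R}(m, n; T),$$
derived from the approximate functional equation for $|\zeta(\tfrac{1}{2} + \i t)|^2$: the diagonal relation $j m^{a} = k n^{a}$ in the formal expansion $\sum_{j, k} (j k)^{-1/2}(j/k)^{\i t}$ forces $j = (n')^{a} \ell$, $k = (m')^{a} \ell$, and the cutoff $j k \leq t/(2\pi)$ produces the logarithm; the off-diagonal terms are controlled by van der Corput / stationary-phase bounds, or alternatively by an Ingham--Atkinson--Motohashi shifted-moment identity.

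\smallskip
\noindent\textbf{Step 3 (Arithmetic extraction).} Inserting the asymptotic, the $T \log T$ term becomes
$$T \log T \cdot \sum_{\substack{m, n \geq 1 \\ \text{squarefree}}} \frac{\mu(m) \mu(n)}{m n (m' n')^{a/2}}.$$
Splitting by $d = \gcd(m, n)$ and using that $d, m', n'$ are pairwise coprime squarefree integers, this sum factors as an Euler product whose local factor at each prime $p$ works out to $1 - 2 p^{-1 - a/2} + p^{-2}$, giving exactly $\D_1(a)$. The $T$ coefficient splits into the contribution of the constant piece $\log(2/\pi) + 2 \gamma - 1$, which yields $\D_1(a) \cdot (\log(2/\pi) + 2 \gamma - 1)$, and the contribution of $-a \log(m' n')$, which equals $2 a \cdot d \D_1(a) / d a$; logarithmically differentiating the Euler product with respect to $a$ identifies this with $\widetilde{\D}_0(a)$. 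Absolute convergence of $\D_1(a)$ for $a \geq 1$ makes the truncation tail $\max(m,n) > X$ harmless.

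\smallskip
\noindent\textbf{Main obstacle.} The delicate part is Step 2: securing the asymptotic for $J(m, n)$ with an error term $\mathcal{R}(m, n; T)$ uniform in $m, n \leq X$ strong enough that
$\sum_{m, n \leq X} |\mathcal{R}(m, n; T)|/(m n) = O_{a,B}\bigl(T(\log T)^{-B}\bigr)$.
This forces a careful balancing act: $X$ must be large enough that the mollification error in Step 1 is absorbed into $T (\log T)^{-B}$, yet small enough that the accumulated twisted-moment error in Step 2 does not blow up. Choosing $X$ of the form $\exp\bigl((\log T)^{\kappa}\bigr)$ for a suitable small $\kappa$, together with uniform control of the off-diagonal oscillatory sums, is the heart of the argument.
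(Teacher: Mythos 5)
Your overall architecture --- mollify $1/\zeta(1+\i at)$ by $\sum_{n\leqslant X}\mu(n)n^{-1-\i at}$, reduce to twisted second moments of $\zeta(\tfrac12+\i t)$, and extract the constants from an Euler product --- is exactly the paper's, and your Step 3 is correct: the local factor $1-2p^{-1-a/2}+p^{-2}$ gives $\D_1(a)$, and identifying the $-a\log(m'n')$ contribution with $2a\,\tfrac{d}{da}\D_1(a)$ does reproduce $\widetilde{\D}_0(a)$. However, Step 1 as written fails quantitatively. With the classical zero-free region the contour can only be shifted to $\Re(s)=-c/\log T$, so the saving in the Perron integral is $X^{-c/\log T}=\exp\bigl(-c(\log T)^{\kappa-1}\bigr)$, which tends to $1$ for every $\kappa<1$; to push the approximation error below $(\log T)^{-B}$ you would need $X\geqslant T^{C\log\log T}$, which is far too long for any twisted-moment input (the method needs $M=X^{a}\ll T^{1/2-\epsilon}$). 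The paper instead uses the Vinogradov--Korobov zero-free region, of width $\asymp(\log T)^{-2/3}(\log\log T)^{-1/3}$, and takes $X=\exp\bigl((\log T)^{\beta}\bigr)$ with $\beta\in(\tfrac23,1)$ --- the exponent must be \emph{large} (above $2/3$), not small --- so that the approximation error is $\exp\bigl(-(\log T)^{\eta}\bigr)$, beating every power of $\log T$. So both ``classical zero-free region'' and ``small $\kappa$ depending on $B$'' must be replaced before the error analysis closes.

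The second issue is that your Step 2, which you rightly flag as the main obstacle, is precisely the theorem of Balasubramanian--Conrey--Heath-Brown on $\int_0^T|\zeta(\tfrac12+\i t)|^2|A(\tfrac12+\i t)|^2\,\d t$ for a Dirichlet polynomial $A$ of length $M$ with $\log M\ll\log T$, with error $O_B\bigl(T(\log T)^{-B}\bigr)+O_\epsilon\bigl(M^2T^{\epsilon}\bigr)$. The paper applies this result as a black box to the polynomial whose coefficients are supported on $a$-th powers (so that $A(\tfrac12+\i t)=\sum_{n\leqslant X}\mu(n)n^{-1-\i at}$), which packages exactly the uniform off-diagonal control you describe; your plan to re-derive the individual asymptotics $J(m,n)$ by stationary phase with errors summable over $m,n\leqslant X$ is essentially a reproof of that theorem and is not carried out in the proposal. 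With Step 1 corrected as above and Step 2 replaced by a citation of Balasubramanian--Conrey--Heath-Brown, your outline becomes the paper's proof.
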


When $a =  2$, we have the following simple formula.
 \begin{corollary}\label{Cor1}
  \begin{align*}
\int_{T}^{2T} \left|\frac{\zeta\left(\frac{1}{2}+\i t\right)}{\zeta\left(1+2\i t\right)}\right|^2 \d t = \frac{1}{\zeta(2)} T \log T + \lb \frac{\log \frac{2}{\pi} + 2\gamma -1 }{\zeta(2)}  -4 \,\frac{\zeta^{\prime}(2)}{\zeta^2(2)}  \rb T + O\left(T\, \left(\log T\right)^{-2023} \right) , \quad \forall T \geqslant 100. 
\end{align*}
  \end{corollary}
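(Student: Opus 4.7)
The plan is to derive the corollary as a direct specialization of Theorem \ref{Main} at $a=2$ and $B=2023$, so the task reduces to evaluating the two constants $\D_1(2)$ and $\widetilde{\D}_0(2)$ in closed form. No new analytic input is needed; only elementary manipulations of the Euler products and a standard logarithmic-derivative identity for $\zeta(s)$.

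First I would simplify the leading constant. With $a=2$, each Euler factor collapses:
\begin{align*}
1 - \frac{2}{p^{1+a/2}} + \frac{1}{p^2} \;=\; 1 - \frac{2}{p^2} + \frac{1}{p^2} \;=\; 1 - \frac{1}{p^2}.
\end{align*}
Taking the product over primes and recognizing the Euler product of $1/\zeta(2)$ gives $\D_1(2) = 1/\zeta(2)$. This immediately produces the main term $T\log T/\zeta(2)$ and the first piece $(\log(2/\pi)+2\gamma-1)T/\zeta(2)$ of the linear term.

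Next I would evaluate the remaining constant $\widetilde{\D}_0(2)$. The Euler-product prefactor is again $1/\zeta(2)$, and inside the sum the factor $(1-1/p^2)^{-1}$ combines with $p^{-2}$ to produce $1/(p^2-1)$, so that
\begin{align*}
\widetilde{\D}_0(2) \;=\; \frac{4}{\zeta(2)} \sum_{p} \frac{\log p}{p^{2}-1}.
\end{align*}
The key observation, which I expect to be the only nontrivial step, is the identity
\begin{align*}
-\frac{\zeta'(s)}{\zeta(s)} \;=\; \sum_{n\geqslant 1} \frac{\Lambda(n)}{n^{s}} \;=\; \sum_{p} \log p \cdot \frac{p^{-s}}{1-p^{-s}} \;=\; \sum_{p} \frac{\log p}{p^{s}-1},
\end{align*}
valid for $\Re(s)>1$ and in particular at $s=2$. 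Substituting at $s=2$ yields $\widetilde{\D}_0(2) = -4\zeta'(2)/\zeta^2(2)$.

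Combining $\D_1(2) = 1/\zeta(2)$ with $\D_0(2) = (\log(2/\pi)+2\gamma-1)\D_1(2) + \widetilde{\D}_0(2)$ reproduces exactly the stated linear coefficient. The error term $O(T(\log T)^{-2023})$ follows from choosing $B = 2023$ in Theorem \ref{Main}. The only place where one has to be a bit careful is ensuring that the reindexing $\log p/(p^2(1-p^{-2})) = \log p/(p^2-1)$ is handled so that the logarithmic-derivative identity applies cleanly; everything else is bookkeeping.
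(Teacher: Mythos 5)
Your proposal is correct and matches the paper's own treatment: the corollary is exactly Theorem \ref{Main} at $a=2$, $B=2023$, with the Euler factor collapsing to $1-p^{-2}$ so that $\D_1(2)=1/\zeta(2)$, and with $\widetilde{\D}_0(2)$ evaluated via $\sum_p \frac{\log p}{p^2-1}=-\zeta'(2)/\zeta(2)$, precisely as in the special cases recorded in Lemmas \ref{Constan1} and \ref{Constan2}. No further comment is needed.
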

  
It is easy to see that $\D_1(a)$ is a strict increasing function of $a$ and the limit $\lim_{a \to \infty} \D_1(a)$ exists.
\begin{corollary}\label{LimiOfa} Let $a \in \N$, then
    \begin{align*}
          \lim_{a \to \infty} \lim_{T \to \infty}\frac{1}{T \log T} \int_{T}^{2T} \left|\frac{\zeta\lb\frac{1}{2}+\i t\rb}{\zeta\lb1+\i at\rb}\right|^2 \d t = \frac{\zeta(2)}{\zeta(4)}.
      \end{align*}
\end{corollary}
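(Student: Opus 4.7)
The plan is to reduce the statement to Theorem~\ref{Main} and then evaluate the limit of the resulting Euler product as $a \to \infty$. First, applying Theorem~\ref{Main} with any fixed $B > 0$ (say $B = 1$), dividing through by $T \log T$, and letting $T \to \infty$ yields, for each $a \in \N$,
\begin{align*}
\lim_{T \to \infty} \frac{1}{T \log T} \int_T^{2T} \left|\frac{\zeta\lb\tfrac{1}{2}+\i t\rb}{\zeta\lb1+\i at\rb}\right|^2 \d t \,=\, \D_1(a) \,=\, \prod_p \lb 1 - \frac{2}{p^{1+a/2}} + \frac{1}{p^2} \rb.
\end{align*}
Hence it suffices to prove that $\lim_{a \to \infty} \D_1(a) = \zeta(2)/\zeta(4)$.

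Second, I would take logarithms and justify interchanging the limit $a \to \infty$ with the sum over primes. For all integers $a \geq 2$ and all primes $p$ one has $0 < 2 p^{-1-a/2} \leq 2 p^{-2}$, so the $p$-th factor equals $1 + O(p^{-2})$ with an absolute implicit constant; taking logarithms yields $\log \lb 1 - 2 p^{-1-a/2} + p^{-2}\rb = O(p^{-2})$, uniformly in $a \geq 2$. This provides a summable majorant, so dominated convergence gives
\begin{align*}
\lim_{a \to \infty} \log \D_1(a) \,=\, \sum_p \lim_{a \to \infty} \log \lb 1 - \frac{2}{p^{1+a/2}} + \frac{1}{p^2} \rb \,=\, \sum_p \log\lb 1 + \frac{1}{p^2}\rb.
\end{align*}

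Third, the identity $(1 + p^{-s})(1 - p^{-s}) = 1 - p^{-2s}$ applied inside the Euler products for $\zeta(s)$ and $\zeta(2s)$ gives $\prod_p \lb 1 + p^{-s}\rb = \zeta(s)/\zeta(2s)$ for $\mathrm{Re}\, s > 1$; specializing at $s = 2$ yields $\prod_p \lb 1 + p^{-2}\rb = \zeta(2)/\zeta(4)$, which on exponentiating the previous display completes the proof. The only real ``obstacle'' is the routine dominated-convergence step, and as observed it is handled by the uniform bound $2 p^{-1-a/2} \leq 2 p^{-2}$ valid for $a \geq 2$.
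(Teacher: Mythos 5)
Your proof is correct and follows essentially the same route as the paper: reduce to Theorem \ref{Main}, so that the inner limit equals $\D_1(a)$, and then pass to the limit in the Euler product to get $\prod_p(1+p^{-2})=\zeta(2)/\zeta(4)$. The only cosmetic difference is that you justify the interchange by dominated convergence with the majorant $2p^{-1-a/2}\leqslant 2p^{-2}$, whereas the paper simply notes that $\D_1(a)$ is increasing in $a$; both are fine.
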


The study of  mean values  of $|\zeta(\sigma + \i t)|$  has a long history, tracing back to the work of  Landau \cite{Landau} and Schnee \cite{Sch}. In 1909, they established that for fixed $\sigma > 1/2$, we have
\begin{align}\label{LanSch}
     \int_{T}^{2T} \left|\zeta\lb\sigma+\i t\rb\right|^2 \d t \sim \zeta(2\sigma)T, \quad \text{as} \quad T \to \infty. 
\end{align}
Later in 1916,  Hardy and Littlewood \cite{HaLw} established that 
\begin{align}\label{HL}
     \int_{T}^{2T} \left|\zeta\lb\frac{1}{2}+\i t\rb\right|^2 \d t \sim T \log T, \quad \text{as} \quad T \to \infty. 
\end{align}
The asymptotic formula \eqref{HL}  was refined by Ingham \cite{Ingham} who established\footnote{The error term $O(T^{\frac{1}{2}}\log T)$ in \eqref{Ingham} has been refined several times. See \cites{Titchmarsh, Bala, HH, Huxley,  BW}.} that

\begin{align}\label{Ingham}
     \int_{T}^{2T} \left|\zeta\lb\frac{1}{2}+\i t\rb\right|^2 \d t = T \log T +  \lb  \log \frac{2}{\pi}+ 2\gamma - 1 \rb T + O\lb T^{\frac{1}{2}}\log T\rb.
\end{align}

 Formula \eqref{LanSch} tells us that  the mean value of $|\zeta(1+2\i t)|^2$  is $\zeta(2)$ and formula \eqref{HL} shows that the mean value of $|\zeta(1/2 + \i t)|^2$ on  $[T, 2T]$  is $\log T$. Thus, our formula in Corollary \ref{Cor1} could be viewed as a mixture of the two formulas. Roughly speaking,  Corollary \ref{Cor1} tells us that \emph{the mean value of the ratios equals the ratios of the mean values}. Namely,  we have
\begin{align}\label{special}
 \frac{1}{T}   \int_{T}^{2T} \left|\frac{\zeta\left(\frac{1}{2}+\i t\right)}{\zeta\left(1+2\i t\right)}\right|^2 \d t \sim \frac{\frac{1}{T}\int_T^{2T}\left|\zeta\lb\frac{1}{2}+\i t\rb\right|^2 \d t}{\frac{1}{T}\int_T^{2T} |\zeta(1+ 2 \i t)|^2 \d t}, \quad \text{as} \quad T \to \infty.
\end{align}

Furthermore, if we divide both sides of formula \eqref{Ingham} by $\zeta(2)$, we almost recover the formula in Corollary \ref{Cor1}, except for the second main term, which differs by $-4\zeta^{\prime}(2)/\zeta^2(2)T$. On the other hand, we note that such an interesting formula \eqref{special} does not exist for $|\zeta(\tfrac{1}{2} + \i t)/\zeta(1 + \i at)|^2$ when $a \neq 2$. This might deserve further research and explanation: why is $a = 2$ so special?

Our motivation for studying mean values of $|\zeta(\tfrac{1}{2} + \i t)/\zeta(1 + \i at)|^2$ comes from  spectral theory of automorphic forms.    When using Kuznetsov’s spectral reciprocity formula (see \cites{Kuz89, Kuz99, Mot03} and \cite{PHK}*{Theorem 3.1}), mean values of  $|\zeta(\tfrac{1}{2} + \i t)^k/\zeta(1 + \i at)^j|$ may appear. For instance, in \cite{PHK}*{Lemma 5.1.}, the authors need to calculate the mean value of $|\zeta(\tfrac{1}{2} + \i t)^4/\zeta(1 + 2\i t)|^2$. Even though the authors only need upper bounds for $\int_T^{2T}|\zeta(\tfrac{1}{2} + \i t)^4/\zeta(1 + 2\i t)|^2\d t$, it still might be interesting to   establish the following asymptotic formula 
\begin{equation*}
\int_{T}^{2T} \left|\frac{\zeta\left(\frac{1}{2}+\i t\right)^4}{\zeta\left(1+\i at\right)}\right|^2 dt \sim \D(a) \, T(\log T)^{16}\,,\quad \text{as} \quad T \to \infty,
\end{equation*}
and find for a given $a \in \R_{>0}$ the positive constant $\D(a)$. It seems unlikely to prove the above formula in the near future,  so we consider the easier problem in this paper, i.e., the second moment of $|\zeta(\tfrac{1}{2} + \i t)/\zeta(1 + \i at)|.$ Our result is the first example of mean values of the type $|\zeta(\tfrac{1}{2} + \i t)^k/\zeta(1 + \i at)^j|$ and may provide new directions for research on the value distributions of zeta and  $L$-functions.  

On the other hand,  we mention that in \cite{Farmer}, Farmer  studied the following mean values  $$\RR(\alpha, \beta, \gamma, \delta): = \int_0^T \frac{ \zeta(\frac{1}{2} + \i t + \alpha)\zeta(\frac{1}{2} - \i t + \beta)}{\zeta(\frac{1}{2} + \i t + \gamma) \zeta(\frac{1}{2} - \i t + \delta)} \d t\,.$$ 

Furthermore, Farmer \cite{Farmer}*{(7.4)} made the  \emph{ratios conjecture} which states that when $\alpha, \beta, \gamma, \delta \ll 1/ \log T$ and $\Re(\gamma), \Re(\delta) > 0,$ then
\begin{align*}
    \RR(\alpha, \beta, \gamma, \delta)  \sim T \frac{(\alpha + \delta)(\beta + \gamma)}{(\alpha + \beta)(\gamma + \delta)} - T^{1-\alpha-\beta}\frac{(\alpha-\gamma)(\beta-\delta)}{(\alpha + \beta)(\gamma + \delta)},\quad \text{as} \quad T \to \infty.
\end{align*}

Farmer's  conjecture concerns ratios of zeta functions near the critical line, while our result concerns ratios of zeta functions along different fixed vertical lines.  Farmer's ratios conjecture has been generalized to other families of $L$-functions. For further background, see \cites{CFZ, CS}.

\section{Proof outline}  We first make use of the  zero-free region
of Vinogradov-Korobov \cites{VinZ, KorZ} to approximate $1/\zeta(1+\i at)$ by a short Dirichlet polynomial:
\[ \frac{1}{\zeta(1+\i at)} \approx  \sum_{n \leqslant  X }  \frac{\mu(n)}{n^{1+ \i at}},  \quad T \leqslant t \leqslant 2T,  \]
where the length $X$ is much smaller than any power of $T$. So to establish Theorem \ref{Main},  we need to  compute the mean value of the product of the Riemann zeta function and the above  Dirichlet polynomial: 
\[ \int_T^{2T}\left|\zeta\lb\frac{1}{2}+\i t\rb\right|^2\left| \sum_{n \leqslant  X }  \frac{\mu(n)}{n^{1+ \i at}}\right|^2 \d t.\]
The tool to  tackle this problem is a theorem by Balasubramanian, Conrey and Heath-Brown  \cite{BCH}. According to their theorem, it suffices to compute the following two types of sums (where $b = 1 -a/2$):
\[\sum_{m,\,n \leqslant X 
 }\frac{\mu(m) \cdot \mu(n)}{(mn)^b [m^a, n^a]}, \quad \quad \quad \quad \sum_{m,\,n \leqslant X 
 }\frac{\mu(m) \cdot \mu(n)}{(mn)^b [m^a, n^a]} \log\frac{(m^a, n^a)}{[m^a, n^a]}.\]
These sums could be considered as variants of GCD sums and log-type GCD sums (see \cites{G, DY, logGCD}).   The main difference is that, in our case, we have the extra M\"obius function as weights, which makes our problem much eaiser than the problems in \cites{G, DY, logGCD}. In our case, only square-free integers  appear in the sums, and the sums converge as $X \to \infty$.  To calculate the limits, we  derive two new identities  inspired by  G\'{a}l's identity\footnote{Refer to equation (34) in \cite{G} for the original version and equation (13) in \cite{logGCD} for a generalized version.},  which transforms a sum into a product.  To estimate the error terms, we use Rankin’s trick to convert them into multiplicative objects that are easier to handle.
 \section{Lemmas}

In this section, we present several lemmas that will be used in the proof of  Theorem \ref{Main}.  The main tool is the following theorem established by Balasubramanian, Conrey and Heath-Brown \cite{BCH}.

\begin{lemma}[Balasubramanian--Conrey--Heath-Brown  \cite{BCH}]\label{BCHB}
  Let $A(s) = \sum_{m \leqslant M} a(m) m^{-s}$ . Suppose that $a(m) \ll_{\epsilon} m^{\epsilon}$ for any $\epsilon > 0$ and $\log M \ll \log T$. Then
  
  \begin{align*}
      \int_{0}^{T} \left|\zeta(\frac{1}{2}+\i t)\right|^2 \left|A\lb\frac{1}{2} + \i t\rb\right|^2 \d t = T \sum_{h,\, k\, \leqslant M} \frac{a(h)}{h}\frac{\overline{a(k)}  }{k} (h, k)\left( \log \frac{T(h, k)^2}{2 \pi h k} + 2\gamma -1\right) + \Er,
  \end{align*}

where $\Er = \Er_1 + \Er_2 $ with $\Er_1 
 \ll_B T \lb \log T\rb^{-B} $ for any $B > 0$ and $\Er_2 \ll_{\epsilon} M^2 T^{\epsilon}$ for any $\epsilon > 0$.
  
\end{lemma}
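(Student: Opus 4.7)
The plan is to open out $|A(\tfrac{1}{2}+\i t)|^2$ as a double Dirichlet series and thereby reduce the problem to a uniform evaluation of a twisted second moment of $\zeta$. Writing
\begin{align*}
|A(\tfrac{1}{2}+\i t)|^2 = \sum_{h,k \leqslant M}\frac{a(h)\overline{a(k)}}{\sqrt{hk}}\lb\frac{k}{h}\rb^{\i t}
\end{align*}
and interchanging the sum with the integral (valid since $\log M \ll \log T$), one obtains
\begin{align*}
\int_0^T |\zeta(\tfrac{1}{2}+\i t)|^2 |A(\tfrac{1}{2}+\i t)|^2 \d t = \sum_{h,k \leqslant M}\frac{a(h)\overline{a(k)}}{\sqrt{hk}}\, J_{h,k}(T),
\end{align*}
where $J_{h,k}(T) := \int_0^T |\zeta(\tfrac{1}{2}+\i t)|^2 (k/h)^{\i t}\,\d t$. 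Everything therefore reduces to an asymptotic formula for $J_{h,k}(T)$ that is uniform in $h,k \leqslant M$.

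To evaluate $J_{h,k}(T)$ I would apply the approximate functional equation to both factors of $|\zeta(\tfrac12+\i t)|^2 = \zeta(\tfrac{1}{2}+\i t)\zeta(\tfrac{1}{2}-\i t)$, with cutoff parameters chosen so that their product is $\asymp t/(2\pi)$. Multiplying out and integrating in $t$ produces a diagonal contribution from pairs $(m,n)$ satisfying $mh = nk$, together with off-diagonal oscillatory contributions. Parametrising $h = (h,k)\,h_1$, $k = (h,k)\,k_1$ with $(h_1,k_1)=1$, the diagonal condition forces $m = k_1 j$, $n = h_1 j$, so the diagonal sum collapses to $\sum_j 1/j$ with $j$ running up to roughly $cT(h,k)^2/(hk)$. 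Evaluating this via $\sum_{n \leqslant x} 1/n = \log x + \gamma + O(x^{-1})$ gives
\begin{align*}
J_{h,k}(T) = \frac{T(h,k)}{\sqrt{hk}}\lb\log\frac{T(h,k)^2}{2\pi h k} + 2\gamma - 1\rb + \text{(error)},
\end{align*}
and substituting this back into the sum over $h,k$ recovers the main term of the lemma.

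The main obstacle is to exhibit the error in the split form $\Er = \Er_1 + \Er_2$ with the strong bound $\Er_1 \ll_B T (\log T)^{-B}$ for every fixed $B > 0$. The crude piece $\Er_2 \ll_\epsilon M^2 T^\epsilon$ is immediate from pointwise estimates on the remainder in the approximate functional equation summed trivially over $h,k \leqslant M$. The bound on $\Er_1$, by contrast, requires genuine cancellation in the off-diagonal sums where $mh \neq nk$: one must exploit oscillation in $t$ via stationary-phase and exponential-sum estimates, in the spirit of Atkinson's formula and the van der Corput methods, and additionally average over the shift parameters. Obtaining a saving of $(\log T)^{-B}$ for \emph{arbitrary} $B$, rather than just a fixed negative power of $\log T$, is the delicate technical heart of the Balasubramanian--Conrey--Heath-Brown argument and would be the hardest step to carry out in detail.
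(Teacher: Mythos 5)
The paper does not prove this lemma at all: it is imported verbatim from Balasubramanian--Conrey--Heath-Brown \cite{BCH}, so there is no in-paper argument to compare against. Judged on its own terms, your outline correctly reproduces the architecture of the actual BCH proof: expand $|A(\tfrac12+\i t)|^2$, reduce to the twisted second moment $J_{h,k}(T)=\int_0^T|\zeta(\tfrac12+\i t)|^2(k/h)^{\i t}\,\d t$ uniformly in $h,k\leqslant M$, and extract the main term from the diagonal $mh=nk$, whose parametrisation $m=k_1j$, $n=h_1j$ you carry out correctly and which does yield $\frac{T(h,k)}{\sqrt{hk}}\bigl(\log\frac{T(h,k)^2}{2\pi hk}+2\gamma-1\bigr)$, matching the stated main term after multiplying by $a(h)\overline{a(k)}/\sqrt{hk}$.

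However, as a proof the proposal has a genuine gap, and you have identified it yourself: the bound $\Er_1\ll_B T(\log T)^{-B}$ on the off-diagonal contribution is asserted to follow from ``stationary phase, van der Corput, averaging over shifts'' but none of this is carried out, and it cannot be waved through. The entire content of the theorem lives in that estimate --- the diagonal computation and the crude bound $\Er_2\ll_\epsilon M^2T^\epsilon$ are routine, whereas the off-diagonal terms $mh\neq nk$ are individually of size comparable to the main term and only become small after exploiting delicate cancellation (in BCH this involves, among other things, careful treatment of the near-diagonal terms $|mh-nk|$ small, estimates for sums of the divisor function in arithmetic progressions, and exponential-sum technology; it occupies most of their paper). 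A referee would not accept ``this is the delicate technical heart and would be the hardest step'' in place of that analysis. Since the lemma is a quoted external result, the honest resolution here is the one the paper takes --- cite \cite{BCH} --- rather than present a sketch whose decisive step is missing.
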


Second, we have the following lemma, which is useful when approximating $1/\zeta(1  + \i t)$ by Dirichlet polynomials. The proof follows the proof of Theorem 3.13 in \cite{Tit}.

\begin{lemma}\label{approx}
 Let $a \in (0, \infty)$, $\beta \in (\frac{2}{3}, 1) $ and $\eta \in (0, \beta - \frac{2}{3})$ be fixed.  For  $T \geqslant 100$, the following inequality
    \begin{align*}
    \left| \frac{1}{\zeta(1+\i t)}  - \sum_{n \leqslant  \e^{\left(\log T\right)^{\beta}} }   \frac{\mu(n)}{n^{1+ \i t}} \right| \ll \e^{-\left(\log T\right)^{\eta}}\,, 
        \end{align*}
     holds   uniformly for all $aT \leqslant t \leqslant 2aT$, where the implied constant  only depends on $a$, $\beta$ and $\eta$.
 
\end{lemma}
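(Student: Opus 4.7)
The plan is to adapt the contour-integration argument from the proof of Titchmarsh's Theorem 3.13, but to push the contour into the Vinogradov--Korobov zero-free region rather than the classical one. This is precisely what allows an error of the form $\exp(-c(\log T)^{\beta-2/3}(\log\log T)^{-1/3})$, which dominates $\e^{-(\log T)^\eta}$ for any $\eta<\beta-2/3$.

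Set $X = \e^{(\log T)^\beta}$. First, I would apply a truncated Perron-type formula to write
\[
\sum_{n \leq X} \frac{\mu(n)}{n^{1+\i t}} = \frac{1}{2\pi \i} \int_{\kappa - \i U}^{\kappa + \i U} \frac{1}{\zeta(1+\i t + w)} \frac{X^w}{w}\,\d w + E_{\mathrm{P}}(X,U,t),
\]
with $\kappa = 1/\log T$ and a truncation height $U$ (say a small positive power of $T$) to be chosen later. The Perron error $E_{\mathrm{P}}$ is standard and will be $\ll X^\kappa T^{\ve}/U$ plus boundary contributions near $n = X$.

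Second, I would shift the contour to the vertical line $\Re(w) = -\sigma_0$ with
\[
\sigma_0 = \frac{c_1}{(\log T)^{2/3}(\log\log T)^{1/3}},
\]
where $c_1>0$ is chosen so that the rectangle $[-\sigma_0,\kappa]\times[-U,U]$ lies inside the Vinogradov--Korobov zero-free region for $\zeta(1+\i t+w)$; uniformly over $aT\leq t\leq 2aT$ (and $U$ a small power of $T$), the imaginary part of the argument of $\zeta$ is of size $\asymp_a T$, so this region is available. The only singularity crossed is the simple pole at $w=0$, with residue $1/\zeta(1+\i t)$.

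Third, I would estimate the three remaining sides of the rectangle using the companion bound $1/\zeta(\sigma+\i u) \ll (\log|u|)^{2/3}(\log\log|u|)^{1/3}$ that holds throughout the zero-free region. The left vertical segment contributes
\[
\ll X^{-\sigma_0} \, (\log T)^{2/3}(\log\log T)^{1/3} \, \log U \ll \exp\!\lb -\frac{c_1 (\log T)^{\beta-2/3}}{(\log\log T)^{1/3}}\rb (\log T)^{O(1)},
\]
while the horizontal pieces and $E_{\mathrm{P}}$ are of order $T^{\ve}/U$. Choosing $U$ to be a fixed small power of $T$ makes the latter exponentially smaller than the left-line contribution, and since $\eta<\beta-2/3$ the whole estimate collapses to $\ll \e^{-(\log T)^\eta}$.

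The main obstacle is the bookkeeping rather than any new idea: one must verify that the Vinogradov--Korobov constant $c_1$ and implicit constants in the $1/\zeta$ bound can be taken uniformly for $aT\leq t\leq 2aT$ (only a dependence on $a$ is allowed), confirm that the truncation error and the horizontal integrals are comfortably dominated by the left-line contribution for the chosen $U$, and absorb the polylogarithmic factors $(\log T)^{O(1)}$ and $(\log\log T)^{-1/3}$ into the slightly weaker bound $\e^{-(\log T)^\eta}$ for an arbitrary $\eta<\beta-2/3$.
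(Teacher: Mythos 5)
Your proposal is correct and follows essentially the same route as the paper: an effective Perron formula for $\sum_{n\leq X}\mu(n)n^{-1-\i t}$, a contour shift to $\Re(w)=-\sigma_0$ inside the Vinogradov--Korobov zero-free region picking up the residue $1/\zeta(1+\i t)$ at $w=0$, and the bound $1/\zeta \ll (\log T)^{2/3}(\log\log T)^{1/3}$ on the shifted contour. The only cosmetic differences are your choices of $\kappa$ and of the truncation height $U$ (the paper takes $c=1/\log X$ and $U=T/2$), which do not affect the argument.
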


\begin{proof}
We present the proof for the case $a = 1$. The proof for general $a$ is the same. Let $ c = \frac{1}{\log x}$ and $x =  \e^{\left(\log T\right)^{\beta}}$. For $T\geqslant 100$, we use an effective version of Perron's formula (see \cite{koukou}*{Theorem 7.2}) to obtain the following 
\begin{align*}
 \sum_{n \leqslant x}    \frac{\mu(n)}{n^{1+ \i t}}  =  \frac{1}{2\pi \i} \int_{c - \i \frac{T}{2}}^{c + \i \frac{T}{2}} 
\frac{1}{\zeta( 1 + \i t +s)} \frac{x^s}{s} \d s  + O \lb \frac{\log x}{T} + \frac{1}{x}\rb .
\end{align*}

By   the  zero-free region
of Vinogradov-Korobov \cites{VinZ, KorZ} and Theorem 3.11 in \cite{Tit}, we find that there exists a positive constant $A$ such that
\begin{align}\label{VK: estimate}
    \left| \frac{1}{\zeta(\sigma + \i t)} \right| \ll (\log T)^{\frac{2}{3}}(\log \log T)^{\frac{1}{3}}, \quad \forall  \sigma \geqslant 1 -  \frac{A}{(\log T)^{\frac{2}{3}}(\log \log T)^{\frac{1}{3}} } \quad \text{and} \quad \frac{T}{2} \leqslant t \leqslant \frac{5T}{2}\,.
\end{align}

Take $\delta = \frac{A}{(\log T)^{\frac{2}{3}}(\log \log T)^{\frac{1}{3}} }$. When $T \leqslant t \leqslant 2T$, $|\Im(s)| \leqslant \frac{T}{2}$, and $\Re(s) \geqslant -\delta$, we have $\frac{T}{2} \leqslant \Im(s+\i t) \leqslant \frac{5T}{2}$ and $\Re(1 + s) \geqslant 1 - \delta$.
By the residue theorem,  we get
\begin{align*}
  \frac{1}{2\pi \i} \int_{c - \i \frac{T}{2}}^{c + \i \frac{T}{2}} 
\frac{1}{\zeta( 1 + \i t +s)} \frac{x^s}{s} \d s  = \frac{1}{\zeta(1  + \i t)}  + \frac{1}{2 \pi \i} \lb \int_{ c - \i  \frac{T}{2}}^{-\delta - \i \frac{T}{2}} +  \int_{-\delta - \i \frac{T}{2}}^{-\delta + \i \frac{T}{2}}   +  \int_{-\delta + \i \frac{T}{2}}^{c + \i \frac{T}{2}} \rb.
\end{align*}
 Next, we will use \eqref{VK: estimate} to estimate the integrals on the right side of the above formula. We have

\begin{align*}
    \left|  \int_{-\delta - \i \frac{T}{2}}^{-\delta + \i \frac{T}{2}} 
\frac{1}{\zeta( 1 + \i t +s)} \frac{x^s}{s} \right| &\ll (\log T)^{\frac{2}{3}}(\log \log T)^{\frac{1}{3}}   x^{-\delta} \int_{-\frac{T}{2}}^{\frac{T}{2}} \frac{\d y}{\sqrt{\delta^2 + y^2}}\\
&\ll (\log T)^{\frac{5}{3}}(\log \log T)^{\frac{1}{3}} \frac{1}{\exp \lb \frac{A}{(\log T)^{\frac{2}{3}}(\log \log T)^{\frac{1}{3}}} (\log T)^{\beta}\rb} \\
&\ll  \e^{-\left(\log T\right)^{\eta}}\,,
\end{align*}

 and

\begin{align*}
     \left|  \int_{-\delta + \i \frac{T}{2}}^{c + \i \frac{T}{2}} 
\frac{1}{\zeta( 1 + \i t +s)} \frac{x^s}{s} \right| &\ll \frac{(\log T)^{\frac{2}{3}}(\log \log T)^{\frac{1}{3}}}{T} \frac{1}{\log x}\lb x^c - x^{-\delta} \rb \ll \frac{1}{T}.
\end{align*}

We could estimate the integral $\int_{ c - \i  \frac{T}{2}}^{-\delta - \i \frac{T}{2}}$ in a similar way. Then we finish the proof of the lemma.
    \end{proof}

The following four lemmas are intended for calculating the coefficients in the asymptotic formula.

\begin{lemma}\label{Constan1}
    Let $a\in \N$ and $b = 1-\frac{a}{2}$, then
    $$\sum_{m=1}^{\infty}\sum_{n=1}^{\infty} \frac{\mu(m) \cdot \mu(n)}{(mn)^b [m^a, n^a]}  = \prod_{p} \big(1 - \frac{2}{p^{1+\frac{a}{2}}} + \frac{1}{p^2} \big) \,.$$
    In particular, when $a=2$, one has
    $$\sum_{m=1}^{\infty}\sum_{n=1}^{\infty} \frac{\mu(m) \cdot \mu(n)}{ [m^2, n^2]}  = \prod_{p} \big(1 -  \frac{1}{p^2} \big) = \frac{1}{\zeta(2)} \,.$$
\end{lemma}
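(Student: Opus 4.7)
The plan is to exhibit the double sum as an Euler product and compute each local factor explicitly. First I would observe the identity $[m^a, n^a] = [m,n]^a$, which holds because at every prime $p$ one has $v_p([m^a,n^a]) = \max(a v_p(m), a v_p(n)) = a\, v_p([m,n])$. Hence the left-hand side becomes $\sum_{m,n} \mu(m)\mu(n)/((mn)^b [m,n]^a)$. Since $\mu(m)\mu(n)$ vanishes unless both $m$ and $n$ are squarefree, only pairs of the form $m = \prod_p p^{\alpha_p}$, $n = \prod_p p^{\beta_p}$ with $\alpha_p, \beta_p \in \{0,1\}$ contribute. For such pairs one has $\mu(m)\mu(n) = \prod_p (-1)^{\alpha_p + \beta_p}$, $(mn)^b = \prod_p p^{b(\alpha_p + \beta_p)}$, and $[m,n]^a = \prod_p p^{a \max(\alpha_p, \beta_p)}$, so the summand is jointly multiplicative over primes.

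Next I would verify absolute convergence, which legitimizes interchanging the sum with the Euler factorisation. At each prime the absolute sum of the four local terms is bounded by $1 + 2 p^{-(1+a/2)} + p^{-2} = 1 + O(p^{-3/2})$, using $a \geq 1$; the product $\prod_p (1 + O(p^{-3/2}))$ converges, so the original double sum converges absolutely and equals the Euler product $\prod_p \sum_{\alpha, \beta \in \{0,1\}} F(p^\alpha, p^\beta)$, where $F(m,n)$ denotes the summand.

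Finally I would compute the local factor. Using $b = 1 - a/2$, so that $a + b = 1 + a/2$ and $a + 2b = 2$, the four cases $(\alpha_p, \beta_p) \in \{0,1\}^2$ contribute, respectively, $1$, $-p^{-(1+a/2)}$, $-p^{-(1+a/2)}$, and $p^{-2}$, which sum to the advertised factor $1 - 2 p^{-(1+a/2)} + p^{-2}$. Taking the product over all primes yields the formula. For the specialization $a = 2$, this reduces to $1 - 1/p^2$ at each prime, and $\prod_p (1 - 1/p^2) = 1/\zeta(2)$. The argument is largely bookkeeping; the only point requiring any care is the absolute convergence check, which is where the constraint $a \geq 1$ enters in an essential way.
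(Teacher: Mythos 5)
Your proposal is correct and follows essentially the same route as the paper: restrict to squarefree $m,n$, use joint multiplicativity of the summand to factor the double sum into an Euler product, and compute the local factor $1 - 2p^{-(a+b)} + p^{-(a+2b)} = 1 - 2p^{-(1+a/2)} + p^{-2}$. Your explicit absolute-convergence check (using $a\geqslant 1$ so that $1+a/2\geqslant 3/2$) is a detail the paper leaves implicit, but the argument is the same.
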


\begin{proof} Note that the sum is over square-free integers, and by multiplicity, we can rewrite the sum as a product over prime numbers
    \begin{align*}
\sum_{m=1}^{\infty}\sum_{n=1}^{\infty} \frac{\mu(m) \cdot \mu(n)}{(mn)^b [m^a, n^a]}  = &\prod_{i = 1} ^{\infty}\lb \sum_{\alpha_i, \beta_i \in \{0,\, 1 \}} \frac{(-1)^{\alpha_i + \beta_i}}{p_i^{a \max \{ \alpha_i, \beta_i\} + b(\alpha_i + \beta_i) }} \rb\\
= & \prod_{i = 1} ^{\infty} \lb 1 - \frac{2}{p_i^{a + b}} + \frac{1}{p_i^{a + 2b}}\rb
\\
= & \prod_{i = 1} ^{\infty} \lb 1 - \frac{2}{p_i^{1+\frac{a}{2}}} + \frac{1}{p_i^{2}}\rb.
    \end{align*}
    
\end{proof}

\begin{lemma}\label{Constan1: err}Let $a\in \N$ and $b = 1-\frac{a}{2}$, then
$$
  \left|   \sum_{\substack{ m, n \in \N \\ m > X \text{ \emph{ or}}\,\; n > X}}   \frac{\mu(m) \cdot \mu(n)}{(mn)^b [m^a, n^a]}  \right|    \ll\begin{cases}
			\frac{\log X}{\sqrt X} , & \text{if\, $a = 1$ }\\
			\frac{(\log X)^2}{X} , & \text{if\, $a = 2$ }\\
            \frac{\log X}{X}, & \text{if\, $a \geqslant 3 $.}
		 \end{cases}
$$

\end{lemma}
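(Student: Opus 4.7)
Since $\mu(m)\mu(n)$ vanishes unless both $m$ and $n$ are squarefree, I will restrict the sum to squarefree pairs, in which case $[m^a, n^a] = m^a n^a / (m, n)^a$ and the summand simplifies to $\mu(m)\mu(n)(m,n)^a/(mn)^{1+a/2}$. By inclusion--exclusion and the symmetry $m \leftrightarrow n$, the sum in question can be rewritten as
$$2 \sum_{m > X}\sum_{n \geqslant 1} \frac{\mu(m)\mu(n)(m,n)^a}{(mn)^{1+a/2}} \;-\; \sum_{m > X}\sum_{n > X} \frac{\mu(m)\mu(n)(m,n)^a}{(mn)^{1+a/2}},$$
so it suffices to bound each of these two pieces in absolute value.

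The main step is to apply the standard substitution $d = (m, n)$, $m = dm'$, $n = dn'$ with $(m', n') = 1$, under which $(m,n)^a/(mn)^{1+a/2} = 1/(d^2 (m' n')^{1+a/2})$. Dropping the coprimality and squarefreeness conditions (which only enlarges an absolute-value estimate), one is left with an upper bound of the shape
$$\sum_{d \geqslant 1} \frac{1}{d^2} \Bigl(\sum_{m' > X/d} \frac{1}{(m')^{1+a/2}}\Bigr) \sum_{n' \geqslant 1} \frac{1}{(n')^{1+a/2}},$$
where the unrestricted series over $n'$ is just $\zeta(1+a/2)$ and the tail over $m'$ is bounded by $\min\bigl(\zeta(1+a/2),\, C(d/X)^{a/2}\bigr)$; the symmetric piece is handled by the same estimate with an additional $\min$ in the $n'$-factor.

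Splitting the $d$-sum at $d = X$, the decisive contribution is $X^{-a/2} \sum_{d \leqslant X} d^{a/2 - 2}$, together with an $O(1/X)$ contribution from $d > X$. When $a = 1$ the inner sum $\sum_{d \leqslant X} d^{-3/2}$ converges, giving $O(X^{-1/2})$; when $a = 2$ the inner sum becomes the harmonic sum, giving $O(X^{-1}\log X)$; when $a \geqslant 3$ one has $\sum_{d \leqslant X} d^{a/2 - 2} \ll X^{a/2 - 1}$, giving $O(X^{-1})$ in each subcase. Each of these matches or improves the stated bound. The main obstacle, if one cares about the exact exponents and logarithmic factors written in the lemma, is the borderline case $a = 2$, where the logarithm is genuinely forced by the divergent sum $\sum_{d \leqslant X} 1/d$; the cases $a = 1$ (a square-root saving via $a/2 < 1$) and $a \geqslant 3$ (absolute convergence in both indices) are essentially mechanical once the $(d,m',n')$-substitution is in place.
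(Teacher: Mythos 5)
Your argument is correct, and it in fact proves slightly more than the lemma asserts (you save a logarithm in each case), which is harmless since the statement is an upper bound. However, your route is genuinely different from the paper's. You exploit the exact identity $(m,n)^a/(mn)^{1+a/2} = d^{-2}(m'n')^{-1-a/2}$ after the substitution $m = dm'$, $n = dn'$ with $d = (m,n)$, and then estimate the tail $\sum_{m' > X/d} (m')^{-1-a/2} \ll \min\bigl(1, (d/X)^{a/2}\bigr)$ directly, with the trichotomy in $a$ emerging from the convergence behaviour of $\sum_{d \leqslant X} d^{a/2-2}$. The paper instead applies Rankin's trick: it bounds the tail by $2X^{-(1-\epsilon)}\sum_{m,n} \mu^2(m)\mu^2(n)\, m^{1-\epsilon} (mn)^{-b}[m^a,n^a]^{-1}$ (with exponent $\tfrac12-\epsilon$ when $a=1$), evaluates the resulting Euler product, observes it is $\ll \epsilon^{-2}$ for $a=2$ and $\ll \epsilon^{-1}$ for $a=1$ or $a \geqslant 3$, and chooses $\epsilon = 1/\log X$; this is where the stated logarithmic factors come from. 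Your gcd-substitution argument is more elementary and yields sharper constants-in-logs, while the paper's Rankin-type bound is more mechanical and transfers verbatim to the companion Lemma \ref{Constan2: err}, where the extra factor $\log\frac{(m^a,n^a)}{[m^a,n^a]}$ is absorbed by $[m,n]^{\delta}$ at the cost of one more logarithm; if you wanted to reuse your method there you would need to track the additional $\log[m,n]$ through the $(d,m',n')$ decomposition, which is doable but no longer a one-line modification.
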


\begin{proof}  
 By  Rankin’s trick, 
\begin{align*}
 \left|  \sum_{\substack{ m, n \in \N \\ m > X \,\;\text{or}\,\; n > X}}  \frac{\mu(m) \cdot \mu(n)}{(mn)^b [m^a, n^a]}  \right| & \leqslant 2    \sum_{\substack{ m, n \in \N \\ m > X }}  \frac{\mu^2(m) \cdot \mu^2(n)}{(mn)^b [m^a, n^a]}\\
 & \leqslant \frac{2}{X^{1-\epsilon}}    \sum_{\substack{ m, n \in \N  }}  \frac{\mu^2(m) \cdot \mu^2(n)}{(mn)^b [m^a, n^a]}m^{1-\epsilon}\\
 & = \frac{2}{X^{1-\epsilon}} \prod_{i = 1}^{\infty}\sum_{\alpha_i = 0}^{1}\sum_{\beta_i = 0}^{1}\frac{1}{p_i^{ b(\alpha_i+\beta_i) + a\,\cdot \max\{\alpha_i,\beta_i\} - (1-\epsilon) \alpha_i  }}\\
 &=\frac{2}{X^{1-\epsilon}} \prod_{p} \bigg( 1+\frac{1}{p^{\frac{a}{2}+\epsilon} }  + \frac{1}{p^{1+\frac{a}{2}}}  + \frac{1}{p^{1+\epsilon}}  \bigg).
 \end{align*}

Clearly, the above product over primes is $\ll \epsilon^{-2}$ when $ a= 2$, and is $\ll \epsilon^{-1}$ when $ a \geqslant 3$. Take $\epsilon = 1/ \log X$, then we obtain the inequality.

When $a =1$, similarly using Rankin’s trick, we obtain
\begin{align*}
 \left|  \sum_{\substack{ m, n \in \N \\ m > X \,\;\text{or}\,\; n > X}}  \frac{\mu(m) \cdot \mu(n)}{\sqrt{mn}\, [m, n]}  \right| & \leqslant \frac{2}{X^{\frac{1}{2}-\epsilon}} \prod_{i = 1}^{\infty}\sum_{\alpha_i = 0}^{1}\sum_{\beta_i = 0}^{1}\frac{1}{p_i^{   \epsilon \alpha_i +  \frac{1}{2}\beta_i + \max\{\alpha_i, \beta_i\}  }}\\
 &=\frac{2}{X^{\frac{1}{2}-\epsilon}} \prod_{p} \bigg( 1+\frac{1}{p^{1+\epsilon} }  + \frac{1}{p^{\frac{3}{2}}}  + \frac{1}{p^{\frac{3}{2}+\epsilon}}  \bigg)\\&\ll \frac{1}{X^{\frac{1}{2}-\epsilon}}\, \epsilon^{-1}.
 \end{align*}
 
Again, take $\epsilon = 1/ \log X$, then we are done.
\end{proof}

\begin{lemma}\label{Constan2: err}Let $a\in \N$ and $b = 1-\frac{a}{2}$, then
$$
  \left|   \sum_{\substack{ m, n \in \N \\ m > X \text{ \emph{ or}}\,\; n > X}}   \frac{\mu(m) \cdot \mu(n)}{(mn)^b [m^a, n^a]} \log\frac{(m^a, n^a)}{[m^a, n^a]} \right|    \ll\begin{cases}
			\frac{(\log X)^2}{\sqrt X} , & \text{if\, $a = 1$ }\\
			\frac{(\log X)^3}{X} , & \text{if\, $a = 2$ }\\
            \frac{(\log X)^2}{X}, & \text{if\, $a \geqslant 3 $.}
		 \end{cases}
$$

\end{lemma}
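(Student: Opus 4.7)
The plan is to mimic the proof of Lemma~\ref{Constan1: err}, accounting for the extra logarithmic factor. Since $m$ and $n$ are square-free, at each prime $p$ one has $v_p((m^a, n^a)) = a\min(v_p(m), v_p(n))$ and $v_p([m^a, n^a]) = a\max(v_p(m), v_p(n))$, so $(m^a, n^a) = (m, n)^a$ and $[m^a, n^a] = [m, n]^a$. Writing $d = (m, n)$, $m = dm'$, $n = dn'$ with $(m', n') = 1$, this gives
$$\left|\log\frac{(m^a, n^a)}{[m^a, n^a]}\right| = a\log(m'n') \leqslant a(\log m + \log n),$$
so by the symmetry of the summand in $(m, n)$ the full sum is bounded by $2a \sum_{m > X,\, n \in \N} \frac{\mu^2(m)\mu^2(n)}{(mn)^b [m^a, n^a]} (\log m + \log n)$.

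Next I would apply Rankin's trick $\mathbf{1}[m > X] \leqslant (m/X)^s$ exactly as in the proof of Lemma~\ref{Constan1: err}, taking $s = 1 - \epsilon$ when $a \geqslant 2$ and $s = 1/2 - \epsilon$ when $a = 1$, and handle the logarithm via the elementary inequality $\log N \leqslant N^\delta/\delta$, valid for $N \geqslant 1$ and $\delta > 0$. This reduces the problem to bounding
$$\frac{1}{\delta\, X^s} \sum_{m, n} \frac{\mu^2(m)\mu^2(n) \, m^s (m^\delta + n^\delta)}{(mn)^b [m^a, n^a]}.$$
Each of the two resulting sums factors as an Euler product over primes, and the insertion of $m^\delta$ (respectively $n^\delta$) only modifies the exponent of a single term in each local factor. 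Carrying out the same local computation as in Lemma~\ref{Constan1: err}, each of these Euler products is $\ll \zeta(1 + \epsilon - \delta)^k \ll (\epsilon - \delta)^{-k}$, where $k = 2$ for $a = 2$ and $k = 1$ for $a = 1$ or $a \geqslant 3$, matching the pole orders in Lemma~\ref{Constan1: err}.

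Finally, I would choose $\delta = 1/\log X$ and $\epsilon = 2/\log X$, so that $\epsilon - \delta = 1/\log X$ while $X^{-s} \ll X^{-1}$ (respectively $\ll X^{-1/2}$ when $a = 1$). The factors $1/\delta \ll \log X$ and $(\epsilon - \delta)^{-k} \ll (\log X)^k$ together contribute exactly one extra power of $\log X$ compared to Lemma~\ref{Constan1: err}, yielding the three bounds in the statement. The only real subtlety is the joint choice of the Rankin parameters $s$ and $\delta$, which must satisfy $\delta < \epsilon$ so the Euler product remains convergent after both shifts are inserted; beyond this bookkeeping I expect no conceptual obstacle not already present in the proof of Lemma~\ref{Constan1: err}.
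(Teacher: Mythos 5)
Your proposal is correct and follows essentially the same route as the paper: bound the logarithm by a small power via $\log N \leqslant N^{\delta}/\delta$, apply Rankin's trick as in Lemma~\ref{Constan1: err}, factor the resulting sum into an Euler product, and take $\delta, \epsilon \asymp 1/\log X$ to collect the extra power of $\log X$. The only (immaterial) difference is that you bound $\bigl|\log\tfrac{(m^a,n^a)}{[m^a,n^a]}\bigr|$ by $a(\log m + \log n)$ and split into two Euler products, whereas the paper bounds it by $2a\log[m,n]$ and absorbs the shift $\delta$ into the exponent of $\max\{\alpha_i,\beta_i\}$ in a single product.
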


\begin{proof}
Note that $\forall \delta > 0, \forall m, n \in \N$, we have
$$ \left| \log\frac{(m^a, n^a)}{[m^a, n^a]}\right| \leqslant 2a\, \log[m, n] \leqslant 2a \cdot \frac{1}{\delta} [m, n]^{\delta}.$$
Following the proof  in the previous lemma, when $a \geqslant 2$, we have
\begin{align*}
     \left|   \sum_{\substack{ m, n \in \N \\ m > X \text{ \emph{ or}}\,\; n > X}}   \frac{\mu(m) \cdot \mu(n)}{(mn)^b [m^a, n^a]} \log\frac{(m^a, n^a)}{[m^a, n^a]} \right|  & \ll \frac{1}{\delta X^{1-\epsilon}} \prod_{i = 1}^{\infty}\sum_{\alpha_i = 0}^{1}\sum_{\beta_i = 0}^{1}\frac{1}{p_i^{ b(\alpha_i+\beta_i) + (a-\delta)\,\cdot \max\{\alpha_i,\beta_i\} - (1-\epsilon) \alpha_i  }}\\
& = \frac{1}{\delta X^{1-\epsilon}} \prod_{p} \bigg( 1+\frac{1}{p^{\frac{a}{2}-\delta+\epsilon} }  + \frac{1}{p^{1+\frac{a}{2}-\delta}}  + \frac{1}{p^{1-\delta+\epsilon}}  \bigg).
\end{align*}

To obain the estimate, we take $\epsilon = 1/ \log X$ and $\delta = \epsilon/2$. Similar process for $ a = 1 $.

\end{proof}

\begin{lemma}\label{Constan2} Let $a\in \N$ and $b = 1-\frac{a}{2}$, then
    $$\sum_{m=1}^{\infty}\sum_{n=1}^{\infty} \frac{\mu(m) \cdot \mu(n)}{(mn)^b [m^a, n^a]} \log\frac{(m^a, n^a)}{[m^a, n^a]} = 2a \prod_{p} \big(1 - \frac{2}{p^{1+\frac{a}{2}}} + \frac{1}{p^2} \big) \cdot \sum_{p}^{}\frac{\log p}{ p^{1+\frac{a}{2}}} (1 - \frac{2}{p^{1+\frac{a}{2}}} + \frac{1}{p^2})^{-1}\,.$$
        In particular, when $a=2$, one has
$$\sum_{m=1}^{\infty}\sum_{n=1}^{\infty} \frac{\mu(m) \cdot \mu(n)}{ [m^2, n^2]} \log\frac{(m^2, n^2)}{[m^2, n^2]} = 4\prod_{p} \big(1 -  \frac{1}{p^2} \big) \cdot \sum_{p}^{}\frac{\log p}{ p^2} (1 -  \frac{1}{p^2})^{-1} = 4\frac{1}{\zeta(2)} \frac{- \zeta^{\prime}(2)}{\zeta(2)} =  -4\frac{ \zeta^{\prime}(2)}{\zeta^2(2)} \,.$$
\end{lemma}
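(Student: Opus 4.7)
The plan is to evaluate the double sum by a local/Euler-product computation, in the spirit of the proof of Lemma \ref{Constan1}. Since $\mu(m)\mu(n)$ vanishes unless $m$ and $n$ are squarefree, I would parametrize squarefree $m = \prod_i p_i^{\alpha_i}$ and $n = \prod_i p_i^{\beta_i}$ with $\alpha_i,\beta_i \in \{0,1\}$, and use the identity
$$\log \frac{(m^a, n^a)}{[m^a, n^a]} \;=\; a\sum_i \bigl(\min(\alpha_i,\beta_i) - \max(\alpha_i,\beta_i)\bigr) \log p_i \;=\; -a\sum_i |\alpha_i - \beta_i|\,\log p_i,$$
which neatly separates the logarithmic weight into a sum of single-prime contributions.

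Next, I would swap the outer sum over primes with the double sum over $(m,n)$ and write $S = -a\sum_j (\log p_j)\, S_j$, where $S_j$ denotes the portion of the double sum restricted to pairs with exactly one of $\alpha_j, \beta_j$ equal to $1$. The absolute convergence needed for this rearrangement is already provided by Lemma \ref{Constan2: err}. Each $S_j$ then factors as an Euler product: the local factor at $p_j$ is
$$\sum_{\substack{\alpha,\beta\in\{0,1\}\\ \alpha+\beta=1}} \frac{(-1)^{\alpha+\beta}}{p_j^{b(\alpha+\beta) + a\max(\alpha,\beta)}} \;=\; -\frac{2}{p_j^{1+\frac{a}{2}}},$$
while at every other prime the factor is $1 - 2/p^{1+a/2} + 1/p^2$, exactly as in Lemma \ref{Constan1}. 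Writing this Euler product as the full one divided by the local factor at $p_j$, one gets
$$S_j \;=\; \frac{-2/p_j^{1+a/2}}{\,1 - 2/p_j^{1+a/2} + 1/p_j^2\,}\,\prod_p\Bigl(1 - \frac{2}{p^{1+a/2}} + \frac{1}{p^2}\Bigr),$$
and assembling the pieces yields the claimed formula.

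For the specialization $a = 2$, I would combine $\prod_p(1 - p^{-2}) = \zeta(2)^{-1}$ with the logarithmic-derivative identity $-\zeta'(s)/\zeta(s) = \sum_p (\log p)/(p^s - 1)$, evaluated at $s = 2$, to rewrite the prime sum as $-\zeta'(2)/\zeta(2)$; this yields the asserted closed form $-4\zeta'(2)/\zeta^2(2)$.

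I do not expect any serious obstacle. The one place requiring care is the interchange of summations that converts the logarithmic weight into $-a\sum_j (\log p_j)\,S_j$ and then collapses each inner sum to an Euler product, but this is precisely the regime in which Lemma \ref{Constan2: err} guarantees absolute convergence, so the rearrangement is legitimate.
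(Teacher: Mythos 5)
Your proposal is correct and follows essentially the same route as the paper: both expand $\log\frac{(m^a,n^a)}{[m^a,n^a]}$ as $-a\sum_p|\alpha_p-\beta_p|\log p$, interchange the sum over primes with the double sum, and evaluate each term as the Euler product of Lemma \ref{Constan1} with one local factor replaced by $-2p^{-1-a/2}$ (the paper folds this into a single "differentiate the Euler product" display, and is less explicit than you about justifying the rearrangement). The local computations and the $a=2$ specialization via $-\zeta'(2)/\zeta(2)=\sum_p \frac{\log p}{p^2-1}$ all check out.
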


\begin{proof}
The proof is similar to the proof of Lemma \ref{Constan1}. We have
    \begin{align*}
&\sum_{m=1}^{\infty}\sum_{n=1}^{\infty} \frac{\mu(m) \cdot \mu(n)}{(mn)^b [m^a, n^a]} \log\frac{(m^a, n^a)}{[m^a, n^a]} \\= &a \sum_{k = 1}^{\infty}\lb \sum_{\alpha_k, \beta_k \in \{0,\, 1 \}} \frac{(-1)^{\alpha_k + \beta_k} \lb - |\alpha_k - \beta_k| \rb \log p_k}{p_k^{a \max \{ \alpha_k, \beta_k\} + b(\alpha_k + \beta_k) }} \rb \prod_{\substack{i = 1\\i \neq k}}^{\infty} \lb \sum_{\alpha_i, \beta_i \in \{0,\, 1 \}} \frac{(-1)^{\alpha_i + \beta_i}}{p_i^{a \max \{ \alpha_i, \beta_i\} + b(\alpha_i + \beta_i) }} \rb \\= &2a\sum_{k = 1}^{\infty}\lb \frac{\log p_k}{p_k^{a + b }} \rb \prod_{\substack{i = 1\\i \neq k}}^{\infty} \lb 1 - \frac{2}{p_i^{a + b}} + \frac{1}{p_i^{a + 2b}}\rb\\= &2a\sum_{k = 1}^{\infty}\lb \frac{\log p_k}{p_k^{a + b }} \rb \lb 1 - \frac{2}{p_k^{a + b}} + \frac{1}{p_k^{a + 2b}}\rb^{-1} \prod_{\substack{i = 1}}^{\infty} \lb 1 - \frac{2}{p_i^{a + b}} + \frac{1}{p_i^{a + 2b}}\rb.
    \end{align*}
  Take $b = 1 - \frac{a}{2}$ in the above formula, then we are done.
\end{proof}

\section{Proof of Theorem \ref{Main}}\label{proof}
Let $\beta \in (\frac{2}{3}, 1) $ and $\eta \in (0, \beta - \frac{2}{3})$ be fixed. Set  $X = \e^{\left(\log T\right)^{\beta}}$, $M  = X^a$, and  $b = 1 - \frac{a}{2}$. Define the arithmetic function $ \a: \N \to \R$ by 
$$
  \a(h) = \begin{cases}
			\frac{\mu(n)}{n^b} , & \text{if\, $h = n^a$ \text{for some} $n \in \N,$ }\\
   \\
           0, & \text{otherwise.}
		 \end{cases}
$$

and let  $\A(s) = \sum_{m \leqslant M} \a(m) m^{-s}.$ Then we have $$\A(\frac{1}{2} + \i t) = \sum_{n \leqslant M ^{\frac{1}{a}}}  \frac{\mu(n)}{n^{1+ \i at}}.$$

And we define  $E_a(t, T)$ by \begin{align*}
    E_a(t, T) &= \frac{1}{\zeta(1+\i a t)}  - \sum_{n \leqslant  \e^{\left(\log T\right)^{\beta}} }   \frac{\mu(n)}{n^{1+ \i at}} = \frac{1}{\zeta(1+\i at)} - \A\lb\frac{1}{2} + \i t\rb .
    \end{align*}

    We now split the integral $\int_T^{2T}|\zeta(\tfrac{1}{2} + \i t)/\zeta(1 + \i at)|^2 \d t$ into a sum of several integrals
\begin{align}\label{goal}
    &\int_{T}^{2T} \left|\frac{\zeta\left(\frac{1}{2}+\i t\right)}{\zeta\left(1+\i at\right)}\right|^2 \d t\\   = \nonumber & M_a(2T) - M_a(T) + \int_{T}^{2T} \left|\zeta\left(\frac{1}{2}+\i t\right)\right|^2 \left|E_a(t, T)\right|^2 \d t +  O\left(  \int_{T}^{2T} \left|\zeta\left(\frac{1}{2}+\i t\right)\right|^2 \left|\A\lb\frac{1}{2} + \i t\rb\right| \left| E_a(t, T)\right| \d t \right),
\end{align}

where $M_a(T)$ is defined as
\begin{align*}
    M_a(T) = \int_{0}^{T} \left|\zeta(\frac{1}{2}+\i t)\right|^2 \left|\A\lb\frac{1}{2} + \i t\rb\right|^2 \d t.
\end{align*}

Next, we use the theorem of of Balasubramanian--Conrey--Heath-Brown (Lemma \ref{BCHB}) to obtain
\begin{align*}
    M_a(T) & = T \sum_{h,\, k\, \leqslant X^a} \frac{\a(h)}{h}\frac{ \overline{\a(k)}  }{k} (h, k)\left( \log \frac{T(h, k)^2}{2 \pi h k} + 2\gamma -1\right) + O_B\left( T \lb\log T\rb^{-B} \right)\\
    & = S_1 + S_2 + S_3 + O_B\left( T \lb\log T\rb^{-B} \right)
\end{align*}

with
\begin{align*}
   S_1 & = T \log T \sum_{m,\,n \leqslant X 
 }\frac{\mu(m) \cdot \mu(n)}{(mn)^b [m^a, n^a]}, \\
     S_2 & = T (- \log 2\pi + 2\gamma -1)\sum_{m,\,n \leqslant X 
 }\frac{\mu(m) \cdot \mu(n)}{(mn)^b [m^a, n^a]},\\
      S_3 & = T \sum_{m,\,n \leqslant X 
 }\frac{\mu(m) \cdot \mu(n)}{(mn)^b [m^a, n^a]} \log\frac{(m^a, n^a)}{[m^a, n^a]}.\\
\end{align*}
 By Lemmas \ref{Constan1}, 
 \ref{Constan1: err},  \ref{Constan2: err}, \ref{Constan2}, we compute

 \begin{align*}
     \sum_{m,\,n \leqslant X 
 }\frac{\mu(m) \cdot \mu(n)}{(mn)^b [m^a, n^a]} = \D_1(a) + O\lb \frac{\log X}{ \sqrt X}\rb,
 \end{align*}
 
 \begin{align*}
      \sum_{m,\,n \leqslant X 
 }\frac{\mu(m) \cdot \mu(n)}{(mn)^b [m^a, n^a]} \log\frac{(m^a, n^a)}{[m^a, n^a]} = \widetilde{\D}_0(a) +   O\lb \frac{\log^2 X}{ \sqrt X}\rb.
 \end{align*}
 Recall that $X = \e^{\left(\log T\right)^{\beta}}$,  so we find that
\begin{align}\label{MaT}
    M_a(T) = \D_1(a) T\log T + \lb(- \log 2\pi + 2\gamma -1)\D_1(a) + \widetilde{\D}_0(a)\rb T + O_B\left( T \lb\log T\rb^{-B} \right).
\end{align} 

By the above formula \eqref{MaT}, we have $M_a(T) \sim \D_1 (a) T \log T $, so $(M_a(2T) - M_a(T))  \sim \D_1 (a) T \log T\ll_a T\log T $.  Thus by Cauchy-Schwarz, Hardy-Littlewood's formula \eqref{HL}, and Lemma \ref{approx}, we have
\begin{align*}
     &\int_{T}^{2T} \left|\zeta\left(\frac{1}{2}+\i t\right)\right|^2 \left|\A\lb\frac{1}{2} + \i t\rb\right| \left| E_a(t, T)\right| \d t \\ &\ll_{a,\, \beta,\, \eta}  \e^{-\left(\log T\right)^{\eta}} \sqrt{\int_{T}^{2T} \left|\zeta(\frac{1}{2}+\i t)\right|^2  \d t } \sqrt{\int_{T}^{2T} \left|\zeta\left(\frac{1}{2}+\i t\right)\right|^2 \left|\A\lb\frac{1}{2} + \i t\rb\right|^2\d t}\\ &\ll_{a,\, \beta,\, \eta} \e^{-\left(\log T\right)^{\eta}} T\log T.
\end{align*}

Again by Hardy-Littlewood’s formula \eqref{HL}  and Lemma \ref{approx}, we obtain
\begin{align*}
    \int_{T}^{2T} \left|\zeta\left(\frac{1}{2}+\i t\right)\right|^2 \left|E_a(t, T)\right|^2 \d t \ll_{a,\, \beta,\, \eta} T \lb \log T \rb \e^{-2\left(\log T\right)^{\eta}}.
\end{align*}

Theorem \ref{Main}  follows from \eqref{goal}, \eqref{MaT},   and the above two estimates.


\section{Further Discussions,  Problems and Conjectures}\label{sec:res}

Theorem \ref{Main}  holds for  $a \in \N$, but we believe
 that such a formula should exist for any positive number $a$ and we  aim to obtain a power-saving error term in the formula. We propose the following  problem.

\begin{problem}
    Let  $a \in (0, \infty)$ be fixed. Show that for all large $T$, the following holds
 \begin{align*}
\int_{T}^{2T} \left|\frac{\zeta\lb\frac{1}{2}+ \i t\rb}{\zeta\lb1+ \i at\rb}\right|^2 \d t = \D_1(a)\, T \log T + \D_0(a) \, T   + O(T^{1 -\theta}),
\end{align*}
for   some positive constant $\theta$, where $\D_1(a)$ and $\D_0(a)$ are the same as in Theorem \ref{Main}.
\end{problem}

The next problem is inspired by Corollary \ref{LimiOfa}.

\begin{problem}
    For what kind of function $ a = a(x)$ such that $ a\lb\R_{>0}\rb \subset \R_{>0}$ and $\lim_{T\to \infty} a(T) = \infty$, the following limit could exist? 
$$\lim_{T \to \infty}\frac{1}{T \log T} \int_{T}^{2T} \left|\frac{\zeta\lb\frac{1}{2}+\i t\rb}{\zeta\lb1+\i a\lb T\rb t\rb}\right|^2 \d t$$
And when does the limit could be equal to $\zeta(2)/\zeta(4)?$
\end{problem}

We propose the following conjecture, inspired  by Theorem \ref{Main}, \eqref{Ingham} and the  folklore  conjecture which states that: for  given $k>0$,  $\int_T^{2T}|\zeta(\tfrac{1}{2} + \i t)|^{2k}\d t \sim \CC_k  T \log^{k^2} T, $    as $T \to \infty$, for some positive constant $\CC_k$ (see section 4 and 5 in \cite{Sound} for further details and introduction). This conjecture has been refined and generalized to other families of $L$-functions in \cite{CFKRS}. In particular, it is conjectured that  $\int_T^{2T}|\zeta(\tfrac{1}{2} + \i t)|^{2k}\d t = T P_{k^2}(\log T) +  O_{\epsilon}(T^{\frac{1}{2}+\epsilon}) $, for some polynomial $P_{k^2}(\cdot)$ of degree $k^2$.

\begin{conjecture}
Let  $a, k\in (0, \infty)$, $ \beta \in \R$ and $\sigma \in (\frac{1}{2}, 1]$ be fixed. Then for all large $T$, we have
 \begin{align*}
\int_{T}^{2T} \left|\frac{\zeta\lb\frac{1}{2}+ \i t\rb}{\left|\zeta\lb\sigma + \i at\rb\right|^{\beta}}\right|^{2k} \d t  =  T P_{k^2}\lb \log T\rb +   O_{\sigma,\, a,\, k,\, \beta}\lb \frac{T}{\log T} \rb, 
\end{align*}
for some polynomial $P_{k^2}(\cdot)$ of degree $k^2$ which  only depends on $\sigma$, $a$, $k$ and $\beta $.
\end{conjecture}

Using the random matrix theory,   Keating and Snaith \cite{Keating-Snaith} successfully predicted  the constant $\CC_k$.We are interested in exploring whether the random matrix theory could also be useful in predicting the leading coefficients of the polynomial in the above conjecture.

We believe similar statements should hold true for other families of $L$-functions as well. For example, when considering mean values for ratios of Dirichlet 
$L$-functions, we propose the following two conjectures. For related conjectures on moments of 
$L$-functions, refer to  \cite{Sound}.

\begin{conjecture}
Let  $k\in (0, \infty)$, $ \beta \in \R$ and $\sigma \in (\frac{1}{2}, 1]$ be fixed. Then for all large prime $q$, we have
 \begin{align*}
\sideset{}{^\star}\sum_{\chi \, \lb\emph{mod}\, q\rb} \left|\frac{L\lb\frac{1}{2}, \chi \rb}{\left|L\lb\sigma , \chi \rb\right|^{\beta}}\right|^{2k}    = q P_{k^2}(\log q) +   O_{\sigma,\, k,\, \beta}\lb  \frac{q}{\log q} \rb, 
\end{align*}
for some polynomial $P_{k^2}(\cdot)$ of degree $k^2$  which only depends on $\sigma$, $k$ and $\beta $. Here,  the sum is over primitive characters $\chi$.
\end{conjecture}

\begin{conjecture}
Let  $k\in (0, \infty)$, $ \beta \in \R$ and $\sigma \in (\frac{1}{2}, 1]$ be fixed. Then for all large $X$, we have
 \begin{align*}
\sideset{}{^\flat}\sum_{|d|\leqslant X} \lb  \frac{L\lb\frac{1}{2}, \chi_d \rb}{L\lb\sigma , \chi_d \rb^{\beta}}\rb^{k} = X P_{\frac{k(k+1)}{2}}(\log X) + O_{\sigma,\, k,\, \beta}\lb  \frac{X}{\log X} \rb , 
\end{align*}
for some polynomial $P_{\frac{k(k+1)}{2}}(\cdot)$ of degree $\frac{k(k+1)}{2}$  which only depends on $\sigma$, $k$ and $\beta $. Here, the  sum is over fundamental discriminants $d$ and $\chi_d$ denotes the associated primitive
quadratic character.
\end{conjecture}

 \section*{Acknowledgements}
 I  thank  Kristian Seip for several helpful discussions. Most of the work was done when I was visiting the Norwegian University of Science and Technology, and most part of the paper was written when I was visiting Shandong University. I thank Kristian Seip and Yongxiao Lin for their hospitality.  The work was supported by the Austrian Science
Fund (FWF), project W1230.  I am currently supported  by a postdoctoral fellowship funded by  the Courtois Chair II in fundamental research;
 the Natural Sciences and Engineering Research Council of Canada; and
the Fonds de recherche du Qu\'ebec - Nature et technologies. Finally, I  thank the anonymous referee who read this paper carefully and provided valuable comments   that improved the presentation of
the paper.


\begin{bibdiv}
\begin{biblist}

\bib{Bala}{article}{
   author={Balasubramanian, R.},
   title={An improvement on a theorem of Titchmarsh on the mean square of
   $\mid \zeta ({1/2}+it)\mid $},
   journal={Proc. London Math. Soc. (3)},
   volume={36},
   date={1978},
   number={3},
   pages={540--576},
   issn={0024-6115},
      review={\MR{0476664}},

   doi={10.1112/plms/s3-36.3.540},
}

\bib{BCH}{article}{
   author={Balasubramanian, R.},
   author={Conrey, J. B.},
   author={Heath-Brown, D. R.},
   title={Asymptotic mean square of the product of the Riemann zeta-function
   and a Dirichlet polynomial},
   journal={J. Reine Angew. Math.},
   volume={357},
   date={1985},
   pages={161--181},
   issn={0075-4102},
   review={\MR{0783539}},
}

\bib{BW}{article}{
   author={Bourgain, Jean},
   author={Watt, Nigel},
   title={Decoupling for perturbed cones and the mean square of $|\zeta
   (\frac 12+it)|$},
   journal={Int. Math. Res. Not. IMRN},
   date={2018},
   number={17},
   pages={5219--5296},
   issn={1073-7928},
   review={\MR{3862121}},
   doi={10.1093/imrn/rnx009},
}

\bib{CFKRS}{article}{
   author={Conrey, J. B.},
   author={Farmer, D. W.},
   author={Keating, J. P.},
   author={Rubinstein, M. O.},
   author={Snaith, N. C.},
   title={Integral moments of $L$-functions},
   journal={Proc. London Math. Soc. (3)},
   volume={91},
   date={2005},
   number={1},
   pages={33--104},
   issn={0024-6115},
   review={\MR{2149530}},
   doi={10.1112/S0024611504015175},
}

\bib{CFZ}{article}{
   author={Conrey, Brian},
   author={Farmer, David W.},
   author={Zirnbauer, Martin R.},
   title={Autocorrelation of ratios of $L$-functions},
   journal={Commun. Number Theory Phys.},
   volume={2},
   date={2008},
   number={3},
   pages={593--636},
   issn={1931-4523},
   review={\MR{2482944}},
   doi={10.4310/CNTP.2008.v2.n3.a4},
}

\bib{CS}{article}{
   author={Conrey, J. B.},
   author={Snaith, N. C.},
   title={Applications of the $L$-functions ratios conjectures},
   journal={Proc. Lond. Math. Soc. (3)},
   volume={94},
   date={2007},
   number={3},
   pages={594--646},
   issn={0024-6115},
   review={\MR{2325314}},
   doi={10.1112/plms/pdl021},
}

\bib{Farmer}{article}{
   author={Farmer, David W.},
   title={Long mollifiers of the Riemann zeta-function},
   journal={Mathematika},
   volume={40},
   date={1993},
   number={1},
   pages={71--87},
   issn={0025-5793},
   review={\MR{1239132}},
   doi={10.1112/S0025579300013723},
}
\bib{G}{article}{
   author={G\'{a}l, I. S.},
   title={A theorem concerning Diophantine approximations},
   journal={Nieuw Arch. Wiskunde (2)},
   volume={23},
   date={1949},
   pages={13--38},
   review={\MR{0027788}},
}

\bib{HH}{article}{
   author={Heath-Brown, D. R.},
   author={Huxley, M. N.},
   title={Exponential sums with a difference},
   journal={Proc. London Math. Soc. (3)},
   volume={61},
   date={1990},
   number={2},
   pages={227--250},
   issn={0024-6115},
   review={\MR{1063046}},
   doi={10.1112/plms/s3-61.2.227},
}
\bib{PHK}{article}{
   author={Humphries, Peter },
   author={Khan,  Rizwanur},
    title={The Twelfth Moment of Hecke L-Functions in the Weight Aspect
},
   journal={ Mathematische Annalen},
   
    date={2023},
    doi ={10.1007/s00208-023-02747-y},
  }

  \bib{HaLw}{article}{
   author={Hardy, G. H.},
   author={Littlewood, J. E.},
   title={Contributions to the theory of the riemann zeta-function and the
   theory of the distribution of primes},
   journal={Acta Math.},
   volume={41},
   date={1916},
   number={1},
   pages={119--196},
   issn={0001-5962},
   review={\MR{1555148}},
   doi={10.1007/BF02422942},
}

\bib{Huxley}{article}{
   author={Huxley, M. N.},
   title={A note on exponential sums with a difference},
   journal={Bull. London Math. Soc.},
   volume={26},
   date={1994},
   number={4},
   pages={325--327},
   issn={0024-6093},
   review={\MR{1302063}},
   doi={10.1112/blms/26.4.325},
}

\bib{Ingham}{article}{
   author={Ingham, A. E.},
   title={Mean-Value Theorems in the Theory of the Riemann Zeta-Function},
   journal={Proc. London Math. Soc. (2)},
   volume={27},
   date={1927},
   number={4},
   pages={273--300},
   issn={0024-6115},
   review={\MR{1575391}},
   doi={10.1112/plms/s2-27.1.273},
}

\bib{KorZ}{article}{
   author={Korobov, N. M.},
   title={Estimates of trigonometric sums and their applications},
   language={Russian},
   journal={Uspehi Mat. Nauk},
   volume={13},
   date={1958},
   number={4(82)},
   pages={185--192},
   issn={0042-1316},
   review={\MR{0106205}},
}

\bib{koukou}{book}{
   author={Koukoulopoulos, Dimitris},
   title={The distribution of prime numbers},
   series={Graduate Studies in Mathematics},
   volume={203},
   publisher={American Mathematical Society, Providence, RI},
   date={2019},
   pages={xii + 356},
   isbn={978-1-4704-4754-0},
   isbn={978-1-4704-6285-7},
   review={\MR{3971232}},
   doi={10.1090/gsm/203},
}

\bib{Keating-Snaith}{article}{
   author={Keating, J. P.},
   author={Snaith, N. C.},
   title={Random matrix theory and $\zeta(1/2+it)$},
   journal={Comm. Math. Phys.},
   volume={214},
   date={2000},
   number={1},
   pages={57--89},
   issn={0010-3616},
   review={\MR{1794265}},
   doi={10.1007/s002200000261},
}

\bib{Kuz89}{article}{
   author={Kuznetsov, N. V.},
   title={Sums of Kloosterman sums and the eighth power moment of the
   Riemann zeta-function},
   conference={
      title={Number theory and related topics},
      address={Bombay},
      date={1988},
   },
   book={
      series={Tata Inst. Fund. Res. Stud. Math.},
      volume={12},
      publisher={Tata Inst. Fund. Res., Bombay},
   },
   isbn={0-19-562367-3},
   date={1989},
   pages={57--117},
   review={\MR{1441327}},
}

\bibitem[Kuz99]{Kuz99} N.~V.~Kuznetsov, \emph{The Hecke Series at the Center of the Critical Strip}, preprint (1999), 27 pages.

\bib{Landau}{book}{
 author={Landau, Edmund},
   title={Handbuch der Lehre von der Verteilung der Primzahlen.},
   volume={2},
   date={1909},
}

\bib{Mot03}{article}{
   author={Motohashi, Y.},
   title={A functional equation for the spectral fourth moment of modular
   Hecke $L$-functions},
   conference={
      title={Proceedings of the Session in Analytic Number Theory and
      Diophantine Equations},
   },
   book={
      series={Bonner Math. Schriften},
      volume={360},
      publisher={Univ. Bonn, Bonn},
   },
   date={2003},
   pages={19},
   review={\MR{2075635}},
}

\bib{Sch}{book}{
 author={Schnee, Walter},
   title={\"Uber Mittelwertsformeln in der Theorie der Dirichlet'schen Reihen, Habilitationsschrift},
    date={1909},
}

\bib{Sound}{article}{
   author={Soundararajan, Kannan},
   title={The distribution of values of zeta and $L$-functions},
   conference={
      title={ICM---International Congress of Mathematicians. Vol. 2. Plenary
      lectures},
   },
   book={
      publisher={EMS Press, Berlin},
   },
   isbn={978-3-98547-060-0},
   isbn={978-3-98547-560-5},
   isbn={978-3-98547-058-7},
   date={2023},
   pages={1260--1310},
   review={\MR{4680281}},
}

\bib{Titchmarsh}{article}{
   author={Titchmarsh, E. C.},

   title={On van der Corput's method and the zeta-function of Riemann (V)},
   journal= {Q. J. Math.},
   pages ={195-210},
   date={1934},
 }

\bib{Tit}{book}{
   author={Titchmarsh, E. C.},
   title={The theory of the Riemann zeta-function},
   edition={2},
   note={Edited and with a preface by D. R. Heath-Brown},
   publisher={The Clarendon Press, Oxford University Press, New York},
   date={1986},
   pages={x+412},
   isbn={0-19-853369-1},
   review={\MR{0882550}},
}
\bib{VinZ}{article}{
   author={Vinogradov, I. M.},
   title={A new estimate of the function $\zeta (1+it)$},
   language={Russian},
   journal={Izv. Akad. Nauk SSSR. Ser. Mat.},
   date={1958},
   pages={161--164},
   issn={0373-2436},
   
}

\bib{DY}{article}{
   author={Yang, Daodao},
   title={Extreme values of derivatives of the Riemann zeta function},
   journal={Mathematika},
   volume={68},
   date={2022},
   number={2},
   pages={486--510},
   issn={0025-5793},
   review={\MR{4418455}},
   doi={10.1112/mtk.12130},
}

\bib{logGCD}{article}{
   author={Yang, Daodao},
   
  title={A note on log-type GCD sums and derivatives of the Riemann zeta function},
   journal={arXiv:2201.12968},
  
   date = {2022},
   
   }

\end{biblist}
\end{bibdiv}

\end{document}